\theoremstyle{definition}
\theoremstyle{plain}
\newtheorem{proposition}{Proposition}
\newtheorem{theorem}{Theorem}
\newlist{abbrv}{itemize}{1}
\setlist[abbrv,1]{label=,labelwidth=0.9in,align=parleft,noitemsep,leftmargin=!}
\newcommand{\ub}[1]{\overline{#1}}
\newcommand{\geo}[1]{\mathtt{#1}}
\title{Tight bounds on the maximal perimeter of convex equilateral small polygons}
\author{Christian Bingane\thanks{Department of Mathematics and Industrial Engineering, Polytechnique Montreal, Montreal, Quebec, Canada, H3C 3A7. Emails: \url{christian.bingane@polymtl.ca}, \url{charles.audet@polymtl.ca}} \and Charles Audet\footnotemark[1]}
\begin{document}
\maketitle
\begin{abstract}
A small polygon is a polygon that has diameter one. The maximal perimeter of a convex equilateral small polygon with $n=2^s$ sides is not known when $s \ge 4$. In this paper, we construct a family of convex equilateral small $n$-gons, $n=2^s$ and $s \ge 4$, and show that their perimeters are within $O(1/n^4)$ of the maximal perimeter and exceed the previously best known values from the literature. In particular, for the first open case $n=16$, our result proves that Mossinghoff's equilateral hexadecagon is suboptimal.
\end{abstract}
\paragraph{Keywords} Planar geometry, equilateral polygons, isodiametric problem, maximal perimeter


\section{Introduction}
The {\em diameter} of a polygon is the largest distance between any pair of its vertices. A polygon is called {\em small} if its diameter equals one. We recall that an {\em equilateral} polygon is a polygon that has all sides of the same length and a {\em regular} polygon is an equilateral polygon whose interior angles are equal. For an integer $n \ge 3$, the problem of finding the maximal perimeter of a convex small $n$-gon was investigated by Reinhardt~\cite{reinhardt1922} in 1922, Vincze~\cite{vincze1950} in 1950, and Datta~\cite{datta1997} in 1997. They proved that for $n \ge 3$
\begin{itemize}
	\item the value $2n\sin \frac{\pi}{2n}$ is an upper bound on the perimeter of any convex small $n$-gon;
	\item the regular small $n$-gon is an optimal solution only when $n$ is odd;
	\item there are finitely many optimal solutions~\cite{mossinghoff2011,hare2013,hare2019} when $n$ has an odd factor and these solutions are all equilateral.
\end{itemize}

When $n$ is a power of $2$, the maximal perimeter problem is solved for $n = 4$ and $n=8$. The case $n=4$ was solved by Tamvakis~\cite{tamvakis1987} in 1987 and the case $n=8$ by Audet, Hansen, and Messine~\cite{audet2007a} in 2007. Both optimal $4$-gon and $8$-gon, shown respectively in Figure~\ref{figure:4gon:Q4} and Figure~\ref{figure:8gon:V8}, are not equilateral. For $n=2^s$ with integer $s\ge 4$, exact solutions in the maximal perimeter problem appear to be presently out of reach. However, tight lower bounds may be obtained analytically. Recently, Bingane~\cite{bingane2022e,bingane2021d} constructed a family of convex non-equilateral small $n$-gons, for $n=2^s$ with $s\ge 4$, and proved that the perimeters obtained cannot be improved for large $n$ by more than $\pi^9/(8n^8)$.

The diameter graph of a polygon is the graph with the vertices of the polygon, and an edge between two vertices exists only if the distance between these vertices equals the diameter. Figure~\ref{figure:4gon}, Figure~\ref{figure:6gon}, and Figure~\ref{figure:8gon} represent diameter graphs of some convex small polygons. The solid lines illustrate pairs of vertices which are unit distance apart. Vincze~\cite{vincze1950} studied the problem of finding the minimal diameter of a convex polygon with unit-length sides. This problem is equivalent to the equilateral case of the maximal perimeter problem. He showed that a necessary condition of a convex equilateral small polygon to have maximal perimeter is that each vertex should have an opposite vertex at a distance equal to the diameter. It is easy to see that for $n=4$, the maximal perimeter of a convex equilateral small $4$-gon is only attained by the regular $4$-gon. Vincze also described a convex equilateral small $8$-gon, shown in Figure~\ref{figure:8gon:X8}, with longer perimeter than that of the regular $8$-gon. In 2004, Audet, Hansen, Messine, and Perron~\cite{audet2004} used both geometrical arguments and methods of global optimization to determine the unique convex equilateral small $8$-gon with the longest perimeter, illustrated in Figure~\ref{figure:8gon:H8}.

For $n=2^s$ with integer $s\ge 4$, the equilateral case of the maximal perimeter problem remains unsolved and, as in the general case, exact solutions appear to be presently out of reach. In 2008, Mossinghoff~\cite{mossinghoff2008} constructed a family of convex equilateral small $n$-gons, for $n=2^s$ with $s\ge 4$, whose perimeters differ from the upper bound $2n\sin \frac{\pi}{2n}$ by just $O(1/n^4)$. By contrast, the perimeter of a regular small $n$-gon differs by $O(1/n^2)$ when $n \ge 4$ is even. In the present paper, we propose tighter lower bounds on the maximal perimeter of convex equilateral small $n$-gons when $n=2^s$ and integer $s \ge 4$ by a constructive approach. Thus, our main result is the following:

\begin{theorem}\label{thm:Bn}
	Suppose $n=2^s$ with integer $s\ge 4$. Let $\ub{L}_n := 2n \sin \frac{\pi}{2n}$ denote an upper bound on the perimeter $L(\geo{P}_n)$ of a convex small $n$-gon $\geo{P}_n$. Let $\geo{M}_n$ denote the convex equilateral small $n$-gon constructed by Mossinghoff~\cite{mossinghoff2008}. Then there exists a convex equilateral small $n$-gon $\geo{A}_n$ such that
	\[
	\ub{L}_n - L(\geo{A}_n) = \frac{\pi^4}{n^4} + O\left(\frac{1}{n^5}\right)
	\]
	and
	\[
	L(\geo{A}_n) - L(\geo{M}_n) = \frac{2\pi^4}{n^4} + O\left(\frac{1}{n^5}\right) > 0.
	\]
\end{theorem}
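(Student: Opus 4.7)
The plan is to exhibit $\geo{B}_n$ explicitly as a low-parameter family of convex equilateral small $n$-gons, compute its common edge length in closed form, optimize the free parameter(s), and then match the resulting Taylor expansion with that of $\ub{L}_n$.

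First I would fix the combinatorial type of the diameter graph. Vincze's necessary condition forces every vertex to lie at unit distance from some other vertex, and since $n=2^s$ has no odd factor (ruling out a cycle of diameters), the natural choice is a path of $n-1$ unit diagonals, which is the same structure Mossinghoff uses. I would place the $n$ vertices at the endpoints of these diagonals and parameterize the polygon by the turning angles at the ``apex'' vertices of the path. Enforcing equilaterality of the $n$ edges together with unit-diameter upper bounds for all non-path vertex pairs reduces the degrees of freedom to one or two small angular parameters $\alpha,\beta=\Theta(1/n)$, with a closed form for the common edge length $\ell(\alpha,\beta,n)$ in terms of products of sines and cosines of linear combinations of $\alpha,\beta$, and $\pi/n$.

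Next, I would write $L(\geo{B}_n)=n\,\ell(\alpha,\beta,n)$ and Taylor expand in the rescaled variables $\tilde\alpha=n\alpha,\ \tilde\beta=n\beta$. Using the classical expansion
\[
\ub{L}_n = \pi - \frac{\pi^3}{24n^2} + \frac{\pi^5}{1920\,n^4} + O\!\left(\frac{1}{n^6}\right),
\]
the deviation $\ub{L}_n-L(\geo{B}_n)$ acquires at order $1/n^4$ a smooth dependence on $(\tilde\alpha,\tilde\beta)$, and I would choose these rescaled parameters to minimize it. The optimal choice should yield a leading gap of exactly $\pi^4/n^4$, improving upon Mossinghoff's suboptimal parameter selection by the announced $2\pi^4/n^4$. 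Subtracting Mossinghoff's already established expansion, namely $L(\geo{M}_n) = \pi - \pi^3/(24n^2) + (\pi^5/1920 - 3\pi^4)/n^4 + O(1/n^5)$~\cite{mossinghoff2008}, then gives the second identity.

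The main obstacle is the asymptotic bookkeeping at order $1/n^4$: the improvement over $\geo{M}_n$ is a genuinely fourth-order effect, so every simplification in the closed form for $\ell$ must be carried to three or four subleading terms before the optimal $(\tilde\alpha,\tilde\beta)$ and the constant $\pi^4$ emerge cleanly, and the maximizer in $(\tilde\alpha,\tilde\beta)$ has to be identified exactly, not merely up to leading order, since its next correction feeds back into the $1/n^4$ coefficient. A secondary obstacle is verifying, uniformly in $n\ge 2^4$, that the family is convex and that no non-adjacent vertex pair exceeds unit distance; both should follow from monotonicity in $1/n$ once $\alpha,\beta=\Theta(1/n)$ is established, with numerical checks at the first few values of $s$ providing a sanity check consistent with the tighter $n=32,64$ solutions reported in the abstract.
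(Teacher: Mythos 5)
The high-level outline (write the perimeter as $n$ times the common edge length, expand in $1/n$, compare with $\ub{L}_n$ and with $L(\geo{M}_n)$) is compatible with the paper, but the central idea is missing, and one of your guiding assumptions is likely to derail the construction.

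You propose to keep ``the same structure Mossinghoff uses'' — a Hamiltonian path of unit diagonals — and to beat his bound merely by a sharper choice of the angular parameters. This is the wrong place to look for the improvement. First, the relevant diameter graphs here are not Hamiltonian paths: both Mossinghoff's $\geo{M}_n$ and the paper's $\geo{B}_n$ use a \emph{tree} consisting of a long path together with several pendant diameter edges hung off it. Second, and more importantly, the paper's gain of $2\pi^4/n^4$ over $\geo{M}_n$ does not come from re-optimizing within Mossinghoff's combinatorial type; it comes from choosing a \emph{different} tree. Concretely, $\geo{B}_n$ has the axis edge $\geo{v}_0-\geo{v}_{n/2}$, a half-path $\geo{v}_0-\geo{v}_{n/2-1}-\cdots-\geo{v}_{3n/4+1}-\geo{v}_{n/4}$ of length $3n/8-1$, and pendant edges $\geo{v}_{4k-1}-\geo{v}_{4k-1+n/2}$ for $k=1,\ldots,n/8$. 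The spacing of those pendants (every fourth path vertex, rather than Mossinghoff's placement) is the new ingredient. Without identifying a new combinatorial type, your plan would at best reproduce Mossinghoff's expansion with the same $3\pi^4/n^4$ leading gap.

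A secondary but real discrepancy: once the diameter-graph tree is fixed and symmetry is imposed, the construction has a \emph{single} free angle $t$ (the angle at $\geo{v}_0$ between two consecutive diameter edges), and $t$ is \emph{determined}, not optimized — it is the unique solution of the closure condition $\|\geo{v}_{3n/4+1}-\geo{v}_{3n/4}\|=2\sin(t/2)$ forcing the one remaining side to match the common edge length $2\sin(t/2)$. The asymptotics then come from expanding this scalar equation to get $t_0(n)=\pi/n-\pi^4/n^5+O(1/n^6)$ and substituting into $L(\geo{B}_n)=2n\sin(t_0(n)/2)$. Your framing as a two-parameter minimization of $\ub{L}_n-L(\geo{B}_n)$ is not how the constant $\pi^4$ actually emerges, and it obscures the fact that the degrees of freedom are fixed by the combinatorics. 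You would need to (i) specify the tree explicitly, (ii) verify the closure equation has a root near $\pi/n$, and (iii) expand that root — none of which is present in the sketch.
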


In addition, we show that the resulting polygons for $n=32$ and $n=64$ are not optimal by providing two convex equilateral small polygons with longer perimeters.

The remainder of this paper is organized as follows. Section~\ref{sec:ngon} recalls principal results on the maximal perimeter of convex small polygons. Section~\ref{sec:Bn} considers the polygons $\geo{A}_n$ and shows that they satisfy Theorem~\ref{thm:Bn}. Section~\ref{sec:optimal} shows that the polygons $\geo{A}_{32}$ and $\geo{A}_{64}$ are not optimal by constructing a $32$-gon and a $64$-gon with larger perimeters. Concluding remarks are presented in Section~\ref{sec:conclusion}.

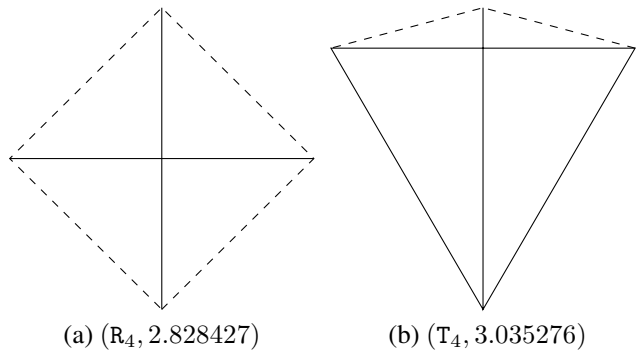
\begin{figure}[h]
	\centering
	\subfloat[$(\geo{R}_4,2.828427)$]{
		\begin{tikzpicture}[scale=4]
			\draw[dashed] (0,0) -- (0.5000,0.5000) -- (0,1) -- (-0.5000,0.5000) -- cycle;
			\draw (0,0) -- (0,1);
			\draw (0.5000,0.5000) -- (-0.5000,0.5000);
		\end{tikzpicture}
	}
	\subfloat[$(\geo{T}_4,3.035276)$]{
		\begin{tikzpicture}[scale=4]
			\draw[dashed] (0.5000,0.8660) -- (0,1) -- (-0.5000,0.8660);
			\draw (0,1) -- (0,0) -- (0.5000,0.8660) -- (-0.5000,0.8660) -- (0,0);
		\end{tikzpicture}
		\label{figure:4gon:Q4}
	}
	\caption{Two convex small $4$-gons $(\geo{P}_4,L(\geo{P}_4))$: (a) Regular $4$-gon; (b) Tamvakis $4$-gon~\cite{tamvakis1987}}
	\label{figure:4gon}
\end{figure}

\begin{figure}[h]
	\centering
	\subfloat[$(\geo{R}_6,3)$]{
		\begin{tikzpicture}[scale=4]
		\draw[dashed] (0,0) -- (0.4330,0.2500) -- (0.4330,0.7500) -- (0,1) -- (-0.4330,0.7500) -- (-0.4330,0.2500) -- cycle;
		\draw (0,0) -- (0,1);
		\draw (0.4330,0.2500) -- (-0.4330,0.7500);
		\draw (0.4330,0.7500) -- (-0.4330,0.2500);
		\end{tikzpicture}
	}
	\subfloat[$(\geo{R}_{3,6},3.105829)$]{
		\begin{tikzpicture}[scale=4]
		\draw[dashed] (0,0) -- (0.3660,0.3660) -- (0.5000,0.8660) -- (0,1) -- (-0.5000,0.8660) -- (-0.3660,0.3660) -- cycle;
		\draw (0,0) -- (0.5000,0.8660) -- (-0.5000,0.8660) -- cycle;
		\draw (0,0) -- (0,1);
		\draw (0.3660,0.3660) -- (-0.5000,0.8660);
		\draw (0.5000,0.8660) -- (-0.3660,0.3660);
		\end{tikzpicture}
	\label{figure:6gon:R36}
	}
	\caption{Two convex equilateral small $6$-gons $(\geo{P}_6,L(\geo{P}_6))$: (a) Regular $6$-gon; (b) Reinhardt $6$-gon~\cite{reinhardt1922}}
	\label{figure:6gon}
\end{figure}
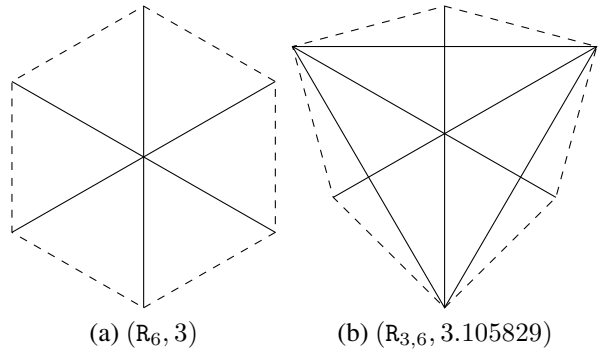

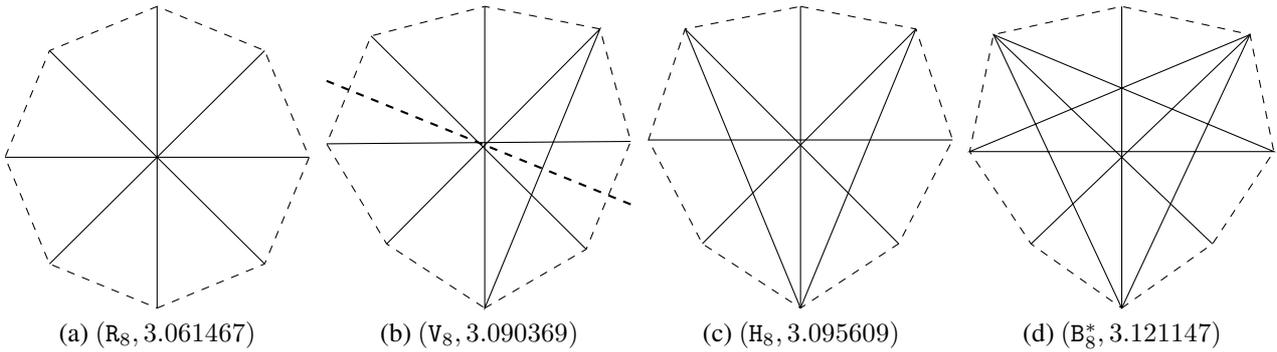
\begin{figure}[h]
	\centering
	\subfloat[$(\geo{R}_8,3.061467)$]{
		\begin{tikzpicture}[scale=4]
		\draw[dashed] (0,0) -- (0.3536,0.1464) -- (0.5000,0.5000) -- (0.3536,0.8536) -- (0,1) -- (-0.3536,0.8536) -- (-0.5000,0.5000) -- (-0.3536,0.1464) -- cycle;
		\draw (0,0) -- (0,1);
		\draw (0.3536,0.1464) -- (-0.3536,0.8536);
		\draw (0.5000,0.5000) -- (-0.5000,0.5000);
		\draw (0.3536,0.8536) -- (-0.3536,0.1464);
		\end{tikzpicture}
	}
	\subfloat[$(\geo{V}_8,3.090369)$]{
		\begin{tikzpicture}[scale=4]
		\draw[dashed] (0,0) -- (0.3335,0.1950) -- (0.4799,0.5525) -- (0.3790,0.9254) -- (0,1) -- (-0.3737,0.9021) -- (-0.5201,0.5446) -- (-0.3225,0.2127) -- cycle;
		\draw (0.3335,0.1950) -- (-0.3737,0.9021);\draw (0.4799,0.5525) -- (-0.5201,0.5446);
		\draw (0,1) -- (0,0) -- (0.3790,0.9254) -- (-0.3225,0.2127);
		\draw[thick,dashed] (0.4799,0.3438) -- (-0.5201,0.7533);
		\end{tikzpicture}
	\label{figure:8gon:X8}
	}
\subfloat[$(\geo{H}_8,3.095609)$]{
	\begin{tikzpicture}[scale=4]
		\draw[dashed] (0,1) -- (0.3796,0.9251) -- (0.5000,0.5574) -- (0.3228,0.2134) -- (0,0) -- (-0.3228,0.2134) -- (-0.5000,0.5574) -- (-0.3796,0.9251) -- cycle;
		\draw (0,0) -- (0,1);
		\draw (0,0) -- (0.3796,0.9251);\draw (0,0) -- (-0.3796,0.9251);
		\draw (0.3796,0.9251) -- (-0.3228,0.2134);\draw (-0.3796,0.9251) -- (0.3228,0.2134);
		\draw (0.5000,0.5574) -- (-0.5000,0.5574);
	\end{tikzpicture}
		\label{figure:8gon:H8}
}
\subfloat[$(\geo{B}_8^*,3.121147)$]{
	\begin{tikzpicture}[scale=4]
		\draw[dashed] (0,0) -- (0.2983,0.2128) -- (0.5000,0.5188) -- (0.4217,0.9067) -- (0,1) -- (-0.4217,0.9067) -- (-0.5000,0.5188) -- (-0.2983,0.2128) -- cycle;
		\draw (0,0) -- (0,1);
		\draw (0,0) -- (0.4217,0.9067) -- (-0.5000,0.5188) -- (0.5000,0.5188)-- (-0.4217,0.9067) -- cycle;
		\draw (0.4217,0.9067) -- (-0.2983,0.2128);\draw (-0.4217,0.9067) -- (0.2983,0.2128);
	\end{tikzpicture}
	\label{figure:8gon:V8}
}
\caption{Four convex small $8$-gons $(\geo{P}_8,L(\geo{P}_8))$: (a) Regular $8$-gon; (b) Vincze $8$-gon~\cite{vincze1950}; (c) Optimal equilateral $8$-gon~\cite{audet2004}; (d) Optimal non-equilateral $8$-gon~\cite{audet2007a}}
\label{figure:8gon}
\end{figure}


\section{Perimeters of convex equilateral small polygons}\label{sec:ngon}
Let $L(\geo{P})$ denote the perimeter of a polygon $\geo{P}$. For a given integer $n\ge 3$, let $\geo{R}_n$ denote the regular small $n$-gon. We have
\[
L(\geo{R}_n) =
\begin{cases}
	2n\sin \frac{\pi}{2n} &\text{if $n$ is odd,}\\
	n\sin \frac{\pi}{n} &\text{if $n$ is even.}\\
\end{cases}
\]

When $n$ has an odd factor $m$, consider the family of convex equilateral small $n$-gons constructed as follows:
\begin{enumerate}
	\item Transform the regular small $m$-gon  $\geo{R}_m$ into a Reuleaux $m$-gon by replacing each edge by a circle's arc passing through its end vertices and centered at the opposite vertex;
	\item Add at regular intervals $n/m-1$ vertices within each arc;
	\item Take the convex hull of all vertices.
\end{enumerate}
These $n$-gons are denoted $\geo{R}_{m,n}$ and $L(\geo{R}_{m,n}) = 2n\sin \frac{\pi}{2n}$. The $6$-gon $\geo{R}_{3,6}$ is illustrated in Figure~\ref{figure:6gon:R36}.

\begin{theorem}[Reinhardt~\cite{reinhardt1922}, Vincze~\cite{vincze1950}, Datta~\cite{datta1997}]\label{thm:perimeter}
	For all $n \ge 3$, let $L_n^*$ denote the maximal perimeter among all convex small $n$-gons, $\ell_n^*$ the maximal perimeter among all equilateral ones, and $\ub{L}_n := 2n \sin \frac{\pi}{2n}$.
	\begin{itemize}
		\item When $n$ has an odd factor $m$, $\ell_n^* = L_n^* = \ub{L}_n$ is achieved by finitely many equilateral $n$-gons~\cite{mossinghoff2011,hare2013,hare2019}, including~$\geo{R}_{m,n}$. The optimal $n$-gon $\geo{R}_{m,n}$ is unique if $m$ is prime and $n/m \le 2$.
		\item When $n=2^s$ with $s\ge 2$, $L(\geo{R}_n) < L_n^* < \ub{L}_n$.
	\end{itemize}
\end{theorem}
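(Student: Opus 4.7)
The statement collects three classical results—the universal upper bound, its attainment by the construction $\geo{R}_{m,n}$, and strict inequalities in the power-of-two case—so my plan is to organize the proof around these three bullets.

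For the universal upper bound $L_n^*\le\ub{L}_n$, I would follow Reinhardt's angular-decomposition idea. Label the vertices $v_0,\ldots,v_{n-1}$ in cyclic order, and to each edge $v_iv_{i+1}$ associate the vertex $v_{j(i)}$ lying in the perpendicular slab of the edge and on the far side; the diameter constraint gives $|v_iv_{j(i)}|\le 1$ and $|v_{i+1}v_{j(i)}|\le 1$. The key step is to show that if $\theta_i$ denotes the half-angle subtended at $v_{j(i)}$ by the edge, then $|v_iv_{i+1}|\le 2\sin\theta_i$ and the $\theta_i$ can be chosen so that $\sum_i\theta_i=\pi/2$. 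Once these are in place, concavity of $\sin$ on $[0,\pi/2]$ and Jensen's inequality yield $L(\geo{P}_n)=\sum_i|v_iv_{i+1}|\le 2n\sin(\pi/(2n))=\ub{L}_n$.

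For the second bullet, when $n$ has an odd factor $m$, the plan is to compute $L(\geo{R}_{m,n})$ by direct inspection of its construction. Each arc of the underlying Reuleaux $m$-gon spans a central angle $\pi/m$, and inserting $n/m-1$ evenly spaced points subdivides it into $n/m$ sub-arcs of central angle $\pi/n$, producing chords of common length $2\sin(\pi/(2n))$. Summing over the $n$ chords yields $L(\geo{R}_{m,n})=\ub{L}_n$, and equilaterality, convexity, and unit diameter are immediate from the Reuleaux construction. This matches the upper bound, so $\ell_n^*=L_n^*=\ub{L}_n$; the finiteness of the set of optima and uniqueness of $\geo{R}_{m,n}$ when $m$ is prime and $n/m\le 2$ are taken from the combinatorial classification of extremal diameter graphs in~\cite{mossinghoff2011,hare2013,hare2019}.

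For the third bullet, the strict inequality $L(\geo{R}_n)<L_n^*$ reduces to exhibiting, for each $s\ge 2$, any convex small $2^s$-gon with perimeter greater than $n\sin(\pi/n)$; for $s=2,3$ this is done by the examples in Figures~\ref{figure:4gon:R3+} and~\ref{figure:8gon:X8}, and for $s\ge 4$ by Mossinghoff's construction~\cite{mossinghoff2008}. The strict inequality $L_n^*<\ub{L}_n$ is the genuine obstacle: equality in Reinhardt's bound would force every edge of an optimizer to be a chord of length $2\sin(\pi/(2n))$ between equally spaced vertices on arcs of a Reuleaux $m$-gon of odd order $m$, and such an embedding requires $m\mid n$ with $m$ odd. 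Ruling out equality when $n=2^s$ thus amounts to a diameter-graph analysis showing that the required "odd spine" cannot exist, and this equality-case combinatorial argument is the step I would expect to be the main obstacle.
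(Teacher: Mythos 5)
The paper does not prove this theorem; it is stated as a background result with attributions to Reinhardt, Vincze, and Datta, and with citations to Mossinghoff and to Hare and collaborators for the finiteness and uniqueness claims, so there is no proof in the paper to compare against line by line. Taken on its own terms, your sketch is a faithful high-level reconstruction of the classical arguments: the angular decomposition and chord bound $|v_iv_{i+1}|\le 2\sin\theta_i$ followed by Jensen's inequality for the universal upper bound, the sub-arc chord computation giving $L(\geo{R}_{m,n})=\ub{L}_n$, and the equality-case diameter-graph analysis (an odd Reuleaux ``spine'') to rule out attainment of $\ub{L}_n$ when $n=2^s$, with the finiteness and $n/m\le 2$ uniqueness correctly delegated to the cited literature just as the paper does. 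One mild imprecision worth flagging: the classical argument establishes $\sum_i\theta_i\le\pi/2$ rather than exhibiting a choice with equality; monotonicity of $\sin$ on $[0,\pi/2]$ together with concavity then closes the Jensen step, so the conclusion is unaffected, but phrasing it as ``can be chosen so that $\sum_i\theta_i=\pi/2$'' claims slightly more than is needed and slightly more than is easy to justify for an arbitrary convex small $n$-gon.
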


When $n=2^s$, both $L_n^*$ and $\ell_n^*$ are only known for $s \le 3$. Tamvakis~\cite{tamvakis1987} found that $L_4^* = 2+\sqrt{6}-\sqrt{2}$, and this value is only achieved by $\geo{T}_4$, represented in Figure~\ref{figure:4gon:Q4}. Audet, Hansen, and Messine~\cite{audet2007a} proved that $L_8^* = 3.121147\dots$, and this value is only achieved by $\geo{B}_8^*$, represented in Figure~\ref{figure:8gon:V8}. For the equilateral quadrilateral, it is easy to see that $\ell_4^* = L(\geo{R}_4) = 2\sqrt{2}$. Audet, Hansen, Messine and Perron~\cite{audet2004} studied the equilateral octagon and determined that $\ell_8^* = 3.095609\ldots > L(\geo{R}_8) = 4\sqrt{2-\sqrt{2}}$, and this value is only achieved by $\geo{H}_8$, represented in Figure~\ref{figure:8gon:H8}. If $u := {\ell_8^*}^2/64$ denotes the square of the sides length of $\geo{H}_8$, we can show that $u$ is the unique root of the polynomial equation
\[
2u^6 - 18u^5 + 57u^4 -78u^3+46u^2-12u+1=0
\]
that belongs to $(\sin^2(\pi/8),4\sin^2(\pi/16))$. Note that the following inequalities are strict: $\ell_4^* < L_4^*$ and $\ell_8^* < L_8^*$.

For $n=2^s$ with $s\ge 4$, exact solutions of the maximal perimeter problem appear to be presently out of reach. However, tight lower bounds may be obtained analytically. Recently, Bingane~\cite{bingane2022e,bingane2021d} proved that, for $n=2^s$ with $s\ge 4$,
\[
L_n^* \ge 2n \sin \frac{\pi}{2n} \cos \left(\frac{1}{2}\arctan \left(\tan \frac{2\pi}{n}\tan \frac{\pi}{n}\right)-\frac{1}{2}\arcsin\left(\frac{\sin (2\pi/n) \sin (\pi/n)}{\sqrt{4\sin^2(\pi/n) + \cos (4\pi/n)}}\right)\right),
\]
which implies
\[
\ub{L}_n - L_n^* \le \frac{\pi^9}{8n^8} + O\left(\frac{1}{n^{10}}\right).
\]
On the other hand, Mossinghoff~\cite{mossinghoff2008} constructed a family of convex equilateral small $n$-gons $\geo{M}_n$, illustrated in Figure~\ref{figure:Mn}, such that
\[
\ub{L}_n - L(\geo{M}_n) = \frac{3\pi^4}{n^4} + O\left(\frac{1}{n^5}\right)
\]
and
\[
L(\geo{M}_n) - L(\geo{R}_n) = \frac{\pi^3}{8n^2} + O\left(\frac{1}{n^4}\right) > 0
\]
for $n=2^s$ with $s\ge 4$. The next section proposes tighter lower bounds for $\ell_n^*$.

\begin{figure}
	\centering
	\subfloat[$(\geo{M}_{16},3.134707)$]{
		\begin{tikzpicture}[scale=6]
			\draw[dashed] (0,0) -- (0.1875,0.0568) -- (0.3390,0.1811) -- (0.4315,0.3538) -- (0.4885,0.5412) -- (0.4922,0.7311) -- (0.3678,0.8885) -- (0.1950,0.9808) -- (0,1) -- (-0.1950,0.9808) -- (-0.3678,0.8885) -- (-0.4922,0.7311) -- (-0.4885,0.5412) -- (-0.4315,0.3538) -- (-0.3390,0.1811) -- (-0.1875,0.0568) -- cycle;
			\draw[red,thick] (0,0)--(0,1);
			\draw[blue,thick] (0,0) -- (0.1950,0.9808) -- (-0.1875,0.0568) -- (0.3678,0.8885) -- (-0.3390,0.1811) -- (0.4922,0.7311) -- (-0.4885,0.5412);
			\draw[blue,thick] (0,0) -- (-0.1950,0.9808) -- (0.1875,0.0568) -- (-0.3678,0.8885) -- (0.3390,0.1811) -- (-0.4922,0.7311) -- (0.4885,0.5412);
			\draw (0.4922,0.7311) -- (-0.4315,0.3538);\draw (-0.4922,0.7311) -- (0.4315,0.3538);
		\end{tikzpicture}
	}
	\subfloat[$(\geo{M}_{32},3.140134)$]{
		\begin{tikzpicture}[scale=6]
			\draw[dashed] (0,0) -- (0.0971,0.0144) -- (0.1895,0.0475) -- (0.2736,0.0979) -- (0.3525,0.1564) -- (0.4184,0.2291) -- (0.4603,0.3178) -- (0.4842,0.4129) -- (0.4986,0.5100) -- (0.4966,0.6081) -- (0.4635,0.7005) -- (0.4131,0.7847) -- (0.3546,0.8635) -- (0.2819,0.9294) -- (0.1932,0.9713) -- (0.0980,0.9952) -- (0,1) -- (-0.0980,0.9952) -- (-0.1932,0.9713) -- (-0.2819,0.9294) -- (-0.3546,0.8635) -- (-0.4131,0.7847) -- (-0.4635,0.7005) -- (-0.4966,0.6081) -- (-0.4986,0.5100) -- (-0.4842,0.4129) -- (-0.4603,0.3178) -- (-0.4184,0.2291) -- (-0.3525,0.1564) -- (-0.2736,0.0979) -- (-0.1895,0.0475) -- (-0.0971,0.0144) -- cycle;
			\draw[red,thick] (0,0) -- (0,1);
			\draw[blue,thick] (0,0) -- (0.0980,0.9952) -- (-0.0971,0.0144) -- (0.1932,0.9713) -- (-0.1895,0.0475) -- (0.2819,0.9294) -- (-0.3525,0.1564) -- (0.3546,0.8635) -- (-0.4184,0.2291) -- (0.4635,0.7005) -- (-0.4603,0.3178) -- (0.4966,0.6081) -- (-0.4986,0.5100);
			\draw[blue,thick] (0,0) -- (-0.0980,0.9952) -- (0.0971,0.0144) -- (-0.1932,0.9713) -- (0.1895,0.0475) -- (-0.2819,0.9294) -- (0.3525,0.1564) -- (-0.3546,0.8635) -- (0.4184,0.2291) -- (-0.4635,0.7005) -- (0.4603,0.3178) -- (-0.4966,0.6081) -- (0.4986,0.5100);
			\draw (0.2819,0.9294) -- (-0.2736,0.0979);\draw (-0.2819,0.9294) -- (0.2736,0.0979);
			\draw (-0.4184,0.2291) -- (0.4131,0.7847);\draw (0.4184,0.2291) -- (-0.4131,0.7847);
			\draw (0.4966,0.6081) -- (-0.4842,0.4129);\draw (-0.4966,0.6081) -- (0.4842,0.4129);
		\end{tikzpicture}
	}
	\caption{Mossinghoff polygons $(\geo{M}_n,L(\geo{M}_n))$: (a) Hexadecagon~$\geo{M}_{16}$; (b) Triacontadigon~$\geo{M}_{32}$}
	\label{figure:Mn}
\end{figure}
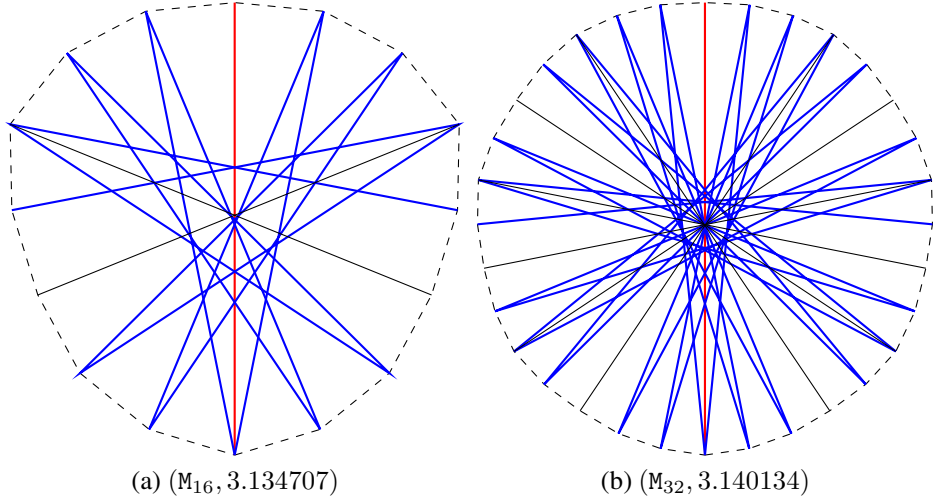

\section{Proof of Theorem~\ref{thm:Bn}}\label{sec:Bn}
Cartesian coordinates are used to describe an $n$-gon $\geo{P}_n$: a vertex $\geo{v}_i$, $i=0,1,\ldots,n-1$, is positioned at abscissa $x_i$ and ordinate $y_i$. Sums or differences of the indices of the coordinates are taken modulo~$n$. Placing the vertex $\geo{v}_0$ at the origin, we set $x_0 = y_0 = 0$. We also assume that the $n$-gon $\geo{P}_n$ is in the half-plane $y\ge 0$ and the vertices $\geo{v}_i$, $i=1,2,\ldots,n-1$, are arranged in a counterclockwise order as illustrated in Figure~\ref{figure:model}, i.e., $x_iy_{i+1} \ge y_ix_{i+1}$ for all $i=1,2,\ldots,n-2$.

The $n$-gon $\geo{P}_n$ is small if $\max_{i,j} \|\geo{v}_i - \geo{v}_j\| = 1$. It is equilateral if $\|\geo{v}_i - \geo{v}_{i-1}\| = c$ for all $i=1,2,\ldots,n$. Imposing that the determinants of the $2\times 2$ matrices satisfy
\[
\sigma_i :=
\begin{vmatrix}
	x_i - x_{i-1} & x_{i+1} - x_{i-1}\\
	y_i - y_{i-1} & y_{i+1} - y_{i-1}
\end{vmatrix}
\ge 0
\]
for all $i=1,2,\ldots,n-1$ ensures the convexity of the $n$-gon.

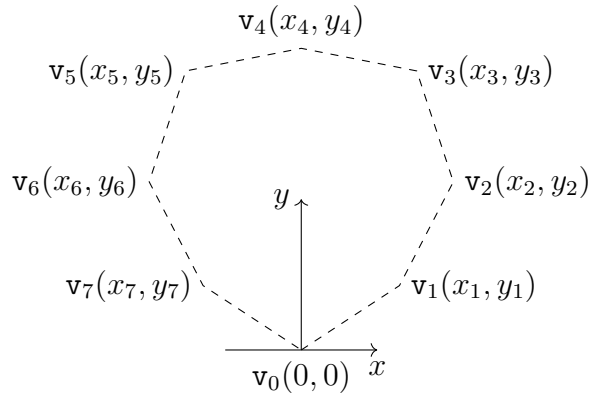
\begin{figure}[h]
	\centering
	\begin{tikzpicture}[scale=4]
		\draw[dashed] (0,0) node[below]{$\geo{v}_0(0,0)$} -- (0.3228,0.2134) node[right]{$\geo{v}_1(x_1,y_1)$} -- (0.5000,0.5574) node[right]{$\geo{v}_2(x_2,y_2)$} -- (0.3796,0.9251) node[right]{$\geo{v}_3(x_3,y_3)$} -- (0,1) node[above]{$\geo{v}_4(x_4,y_4)$} -- (-0.3796,0.9251) node[left]{$\geo{v}_5(x_5,y_5)$} -- (-0.5000,0.5574) node[left]{$\geo{v}_6(x_6,y_6)$} -- (-0.3228,0.2134) node[left]{$\geo{v}_7(x_7,y_7)$} -- cycle;
		\draw[->] (-0.25,0)--(0.25,0)node[below]{$x$};
		\draw[->] (0,0)--(0,0.5)node[left]{$y$};
	\end{tikzpicture}
	\caption{Definition of variables: Case of $n=8$ vertices}
	\label{figure:model}
\end{figure}

For any $n=2^s$ where $s\ge 4$ is an integer, we introduce a convex equilateral small $n$-gon called~$\geo{A}_n$ and constructed as follows. Its diameter graph has the edge $\geo{v}_0-\geo{v}_{\frac{n}{2}}$ as axis of symmetry and can be described by  the $(3n/8-1)$-length half-path $\geo{v}_0 - \geo{v}_{\frac{n}{2}-1} - \ldots - \geo{v}_{\frac{3n}{4}+1} - \geo{v}_{\frac{n}{4}}$ and the pendant edges $\geo{v}_0 - \geo{v}_{\frac{n}{2}}$, $\geo{v}_{4k-1} - \geo{v}_{4k-1+\frac{n}{2}}$, $k=1,2,\ldots,n/8$. The polygons $\geo{A}_{16}$ and $\geo{A}_{32}$ are shown  in Figure~\ref{figure:Bn}. They are symmetrical with respect to the vertical diameter.

\begin{figure}[h]
	\centering
	\subfloat[$(\geo{A}_{16},3.135288)$]{
		\begin{tikzpicture}[scale=6]
			\draw[dashed] (0,0) -- (0.1875,0.0569) -- (0.3604,0.1492) -- (0.4847,0.3006) -- (0.4960,0.4963) -- (0.4390,0.6838) -- (0.3465,0.8565) -- (0.1950,0.9808) -- (0,1) -- (-0.1950,0.9808) -- (-0.3465,0.8565) -- (-0.4390,0.6838) -- (-0.4960,0.4963) -- (-0.4847,0.3006) -- (-0.3604,0.1492) -- (-0.1875,0.0569) -- cycle;
			\draw[red,thick] (0,0)--(0,1);
			\draw[blue,thick] (0,0) -- (0.1950,0.9808) -- (-0.3604,0.1492) -- (0.3465,0.8565) -- (-0.4847,0.3006) -- (0.4960,0.4963);
			\draw[blue,thick] (0,0) -- (-0.1950,0.9808) -- (0.3604,0.1492) -- (-0.3465,0.8565) -- (0.4847,0.3006) -- (-0.4960,0.4963);
			\draw (0.1950,0.9808) -- (-0.1875,0.0569);\draw (-0.1950,0.9808) -- (0.1875,0.0569);
			\draw (-0.4847,0.3006) -- (0.4390,0.6838);\draw (0.4847,0.3006) -- (-0.4390,0.6838);
		\end{tikzpicture}
	}
	\subfloat[$(\geo{A}_{32},3.140246)$]{
		\begin{tikzpicture}[scale=6]
			\draw[dashed] (0,0) -- (0.0971,0.0144) -- (0.1923,0.0382) -- (0.2810,0.0802) -- (0.3537,0.1461) -- (0.4121,0.2249) -- (0.4626,0.3091) -- (0.4957,0.4015) -- (0.4995,0.4995) -- (0.4851,0.5966) -- (0.4613,0.6918) -- (0.4193,0.7805) -- (0.3534,0.8532) -- (0.2746,0.9117) -- (0.1904,0.9621) -- (0.0980,0.9952) -- (0,1) -- (-0.0980,0.9952) -- (-0.1904,0.9621) -- (-0.2746,0.9117) -- (-0.3534,0.8532) -- (-0.4193,0.7805) -- (-0.4613,0.6918) -- (-0.4851,0.5966) -- (-0.4995,0.4995) -- (-0.4957,0.4015) -- (-0.4626,0.3091) -- (-0.4121,0.2249) -- (-0.3537,0.1461) -- (-0.2810,0.0802) -- (-0.1923,0.0382) -- (-0.0971,0.0144) -- cycle;
			\draw[red,thick] (0,0)--(0,1);
			\draw[blue,thick] (0,0) -- (0.0980,0.9952) -- (-0.1923,0.0382) -- (0.1904,0.9621) -- (-0.2810,0.0802) -- (0.3534,0.8532) -- (-0.3537,0.1461) -- (0.4193,0.7805) -- (-0.4626,0.3091) -- (0.4613,0.6918) -- (-0.4957,0.4015) -- (0.4995,0.4995);
			\draw[blue,thick] (0,0) -- (-0.0980,0.9952) -- (0.1923,0.0382) -- (-0.1904,0.9621) -- (0.2810,0.0802) -- (-0.3534,0.8532) -- (0.3537,0.1461) -- (-0.4193,0.7805) -- (0.4626,0.3091) -- (-0.4613,0.6918) -- (0.4957,0.4015) -- (-0.4995,0.4995);
			\draw (0.0980,0.9952) -- (-0.0971,0.0144);\draw (-0.0980,0.9952) -- (0.0971,0.0144);
			\draw (-0.2810,0.0802) -- (0.2746,0.9117);\draw (0.2810,0.0802) -- (-0.2746,0.9117);
			\draw (0.4193,0.7805) -- (-0.4121,0.2249);\draw (-0.4193,0.7805) -- (0.4121,0.2249);
			\draw (-0.4957,0.4015) -- (0.4851,0.5966);\draw (0.4957,0.4015) -- (-0.4851,0.5966);
		\end{tikzpicture}
	}
	\caption{Polygons $(\geo{A}_n,L(\geo{A}_n))$ defined in Theorem~\ref{thm:Bn}: (a) Hexadecagon $\geo{A}_{16}$; (b) Triacontadigon $\geo{A}_{32}$}
	\label{figure:Bn}
\end{figure}
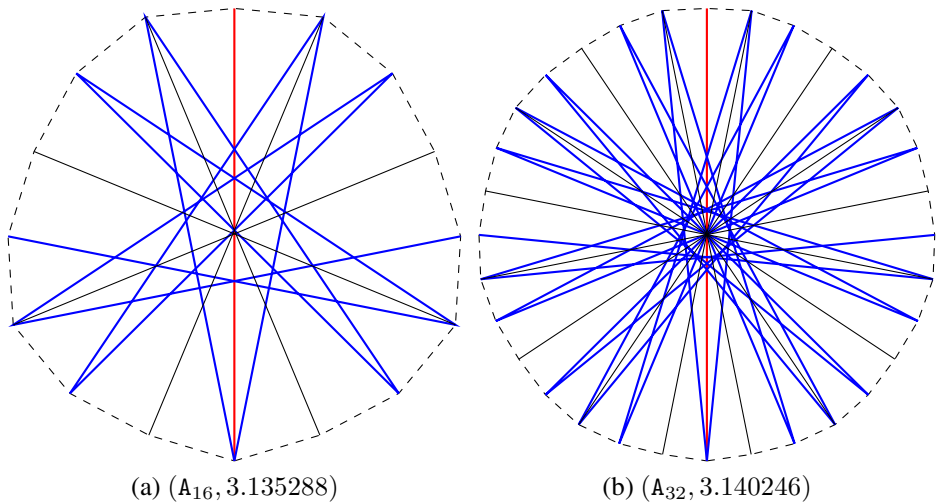

Place the vertex $\geo{v}_{\frac{n}{2}}$ at $(0,1)$ in the plane. Let $t = \angle \geo{v}_{\frac{n}{2}} \geo{v}_0 \geo{v}_{\frac{n}{2}-1} \in (0,\pi/n)$. This implies that the sides length of~$\geo{A}_{n}$ is $2\sin (t/2)$. Since $\geo{A}_n$ is equilateral and symmetric, we have from the half-path $\geo{v}_0 - \ldots - \geo{v}_{\frac{n}{4}}$,
\[
\begin{aligned}
	x_{\frac{3n}{4}+1} &= \sin t - \sum_{k=1}^{n/8-1} (-1)^{k-1} (\sin (4k-1)t - \sin 4kt + \sin (4k+1)t)\\
	&= \sin t - \frac{(2\cos t -1)(\sin 2t + \sin (n/2-2)t)}{2\cos 2t} &&= -x_{\frac{n}{4}-1},\\
	x_{\frac{n}{4}} &= x_{\frac{3n}{4}+1} + \sin (n/2-1)t &&= -x_{\frac{3n}{4}},\\
	y_{\frac{3n}{4}+1} &= \cos t - \sum_{k=1}^{n/8-1} (-1)^{k-1} (\cos (4k-1)t - \cos 4kt + \cos (4k+1)t)\\
	&= \cos t - \frac{(2\cos t -1)(\cos 2t + \cos (n/2-2)t)}{2\cos 2t} &&= y_{\frac{n}{4}-1},\\
	y_{\frac{n}{4}} &= y_{\frac{3n}{4}+1} + \cos (n/2-1)t &&= y_{\frac{3n}{4}}.
\end{aligned}
\]
	
Finally, the angle $t$ is chosen so that $\|\geo{v}_{\frac{3n}{4}+1}-\geo{v}_{\frac{3n}{4}}\| = 2\sin (t/2)$, i.e.,
\[
(2x_{\frac{3n}{4}+1} + \sin (n/2-1)t)^2+\cos^2 (n/2-1)t = 4\sin^2(t/2).
\]
An asymptotic analysis produces that, for large $n$, this equation has a solution $t_0(n)$ satisfying
\[
t_0(n) = \frac{\pi}{n} - \frac{\pi^4}{n^5} + \frac{\pi^5}{n^6} -\frac{11\pi^6}{6n^7} + \frac{35\pi^7}{12n^8} + O\left(\frac{1}{n^9}\right).
\]
By setting $t = t_0(n)$, the perimeter of $\geo{A}_n$ is
\[
\begin{aligned}
	L(\geo{A}_n) &= 2n\sin \frac{t_0(n)}{2} = 2n \sin \left(\frac{\pi}{2n} - \frac{\pi^4}{2n^5} + O\left(\frac{1}{n^6}\right)\right)\\
	&= \pi - \frac{\pi^3}{24n^2} + \left(\frac{\pi^5}{1920}-\pi^4\right)\frac{1}{n^4} + \frac{\pi^5}{n^5} - \left(\frac{\pi^7}{322560}+\frac{41\pi^6}{24}\right)\frac{1}{n^6} + O\left(\frac{1}{n^7}\right)
\end{aligned}
\]
and
\[
\ub{L}_n - L(\geo{A}_n) = \frac{\pi^4}{n^4} - \frac{\pi^5}{n^5} + O\left(\frac{1}{n^6}\right).
\]
Since the polygon $\geo{M}_n$ proposed by Mossinghoff~\cite{mossinghoff2008} satisfies
\[
L(\geo{M}_n) = \pi - \frac{\pi^3}{24n^2} + \left(\frac{\pi^5}{1920}-3\pi^4\right)\frac{1}{n^4} + \frac{9\pi^5}{n^5} - \left(\frac{\pi^7}{322560}+\frac{9\pi^6}{8}\right)\frac{1}{n^6} + O\left(\frac{1}{n^7}\right),
\]
it follows that
\[
L(\geo{A}_n) - L(\geo{M}_n) = \frac{2\pi^4}{n^4} - \frac{8\pi^5}{n^5} - \frac{7\pi^6}{12n^6} + O\left(\frac{1}{n^7}\right).
\]

To verify that $\geo{A}_n$ is small, we calculate
\[
\|\geo{v}_{\frac{n}{4}} - \geo{v}_{\frac{3n}{4}}\| = 2x_{\frac{n}{4}} = 1 - \frac{\pi^3}{n^3} - \frac{7\pi^5}{4n^5} + O\left(\frac{1}{n^7}\right) < 1.
\]
To test that $\geo{A}_n$ is convex, we compute
\[
\sigma_{\frac{n}{4}} = \frac{2\pi^3}{n^3} - \frac{\pi^4}{n^4} + O\left(\frac{1}{n^5}\right) > 0.
\]
This completes the proof of Theorem~\ref{thm:Bn}.\qed

All polygons presented in this work and in~\cite{bingane2022a,bingane2022b} were implemented as a package: OPTIGON~\cite{optigon}, which is freely available on GitHub. In OPTIGON, we provide Julia and MATLAB functions that give the coordinates of the vertices. One can also find an algorithm developed in~\cite{bingane2022d} to find an estimate of the maximal area of a small $n$-gon when $n \ge 6$ is even.

Table~\ref{table:perimeter} shows the perimeters of $\geo{A}_n$, along with the upper bounds $\ub{L}_n$, the perimeters of the regular polygons $\geo{R}_n$ and Mossinghoff polygons $\geo{M}_n$. When $n = 2^s$ and $s\ge 4$, $\geo{A}_n$ provides a tighter lower bound on the maximal perimeter $\ell_n^*$ compared to the best prior convex equilateral small $n$-gon~$\geo{M}_n$. As $n$ increases, it is not surprising that the fraction $\frac{L(\geo{A}_n) - L(\geo{M}_n)}{\ub{L}_n - L(\geo{M}_n)}$ of the length of the interval $[L(\geo{M}_n), \ub{L}_n]$ containing $L(\geo{A}_n)$ approaches $\frac{2}{3}$ since $L(\geo{A}_n) - L(\geo{M}_n) \sim \frac{2\pi^4}{n^4}$ and $\ub{L}_n - L(\geo{M}_n) \sim \frac{3\pi^4}{n^4}$ for large~$n$.

\begin{table}[h]
	\footnotesize
	\centering
	\caption{Perimeters of $\geo{A}_n$}
	\label{table:perimeter}
		\begin{tabular}{@{}rllllr@{}}
			\toprule
			$n$ & $L(\geo{R}_n)$ & $L(\geo{M}_n)$ & $L(\geo{A}_n)$ & $\ub{L}_n$ & $\frac{L(\geo{A}_n) - L(\geo{M}_n)}{\ub{L}_n - L(\geo{M}_n)}$ \\
			\midrule
			16	&	3.1214451523	&	3.1347065475	&	3.1352878881	&	3.1365484905	&	0.3156	\\
			32	&	3.1365484905	&	3.1401338091	&	3.1402460942	&	3.1403311570	&	0.5690	\\
			64	&	3.1403311570	&	3.1412623836	&	3.1412717079	&	3.1412772509	&	0.6272	\\
			128	&	3.1412772509	&	3.1415127924	&	3.1415134468	&	3.1415138011	&	0.6487	\\
			256	&	3.1415138011	&	3.1415728748	&	3.1415729180	&	3.1415729404	&	0.6589	\\
			\bottomrule
		\end{tabular}
\end{table}

\section{Improved triacontadigon and hexacontatetragon}\label{sec:optimal}
It is natural to ask if the polygon constructed $\geo{A}_n$ might be optimal for some $n$. Using constructive arguments, Proposition~\ref{thm:Z32} and Proposition~\ref{thm:Z64} show that $\geo{A}_{32}$ and $\geo{A}_{64}$ are suboptimal.

\begin{proposition}\label{thm:Z32}
There exists a convex equilateral small $32$-gon whose perimeter exceeds that of~$\geo{A}_{32}$.
\end{proposition}
\begin{proof}
Consider the $32$-gon $\geo{Z}_{32}$, illustrated in Figure~\ref{figure:Z32}. Its diameter graph has the edge $\geo{v}_0-\geo{v}_{16}$ as axis of symmetry and can be described by the $4$-length half-path $\geo{v}_0-\geo{v}_{11}-\geo{v}_{24}-\geo{v}_{10}-\geo{v}_{23}$ and the pendant edges $\geo{v}_{0} - \geo{v}_{15}, \ldots, \geo{v}_{0} - \geo{v}_{12}$, $\geo{v}_{11} - \geo{v}_{31}, \ldots, \geo{v}_{11} - \geo{v}_{25}$.
	
Place the vertex $\geo{v}_0$ at $(0,0)$ in the plane, and the vertex $\geo{v}_{16}$ at $(0,1)$. Let $t = \angle \geo{v}_{16} \geo{v}_0 \geo{v}_{15} \in (0,\pi/32)$. We have, from the half-path $\geo{v}_0 - \ldots -\geo{v}_{23}$,
	\[
	\begin{aligned}
		x_{10} &= \sin 5t - \sin 13t + \sin 14t &&= -x_{22}, & y_{10} &= \cos 5t - \cos 13t + \cos 14t &&= y_{11},\\
		x_{23} &= x_{10} - \sin 15t &&= -x_9, & y_{23} &= y_{10} - \cos 15t &&= y_9.
	\end{aligned}
	\]

Finally, $t$ is chosen so that $\|\geo{v}_{10}-\geo{v}_9\| = 2\sin (t/2)$, i.e.,
\[
(2(\sin 5t - \sin 13t + \sin 14t) - \sin 15t)^2+\cos^2 15t = 4\sin^2(t/2).
\]
We obtain $t = 0.0981744286\ldots$ and $L(\geo{Z}_{32}) = 64\sin (t/2) = 3.1403202339\ldots  > L(\geo{A}_{32})$. One can verify that $\geo{Z}_{32}$ is small and convex.
\end{proof}

\begin{proposition}\label{thm:Z64}
There exists a convex equilateral small $64$-gon whose perimeter exceeds that of~$\geo{A}_{64}$.
\end{proposition}
\begin{proof}
Consider the $64$-gon $\geo{Z}_{64}$, illustrated in Figure~\ref{figure:Z64}. Its diameter graph has the edge $\geo{v}_0-\geo{v}_{32}$ as axis of symmetry and can be described by the $23$-length half-path $\geo{v}_0-\geo{v}_{31}-\geo{v}_{63}-\geo{v}_{30}-\geo{v}_{61} - \geo{v}_{29} - \geo{v}_{60} - \geo{v}_{28} - \geo{v}_{58} - \geo{v}_{27} - \geo{v}_{57} - \geo{v}_{26} - \geo{v}_{56} - \geo{v}_{25} - \geo{v}_{55} - \geo{v}_{24} - \geo{v}_{54} - \geo{v}_{23} - \geo{v}_{53} - \geo{v}_{21} - \geo{v}_{52} - \geo{v}_{19} - \geo{v}_{51} - \geo{v}_{16}$, the pendant edges $\geo{v}_{30}-\geo{v}_{62}$, $\geo{v}_{28} - \geo{v}_{59}$, $\geo{v}_{53} - \geo{v}_{22}$, $\geo{v}_{52} - \geo{v}_{20}$, $\geo{v}_{51} - \geo{v}_{18}$, $\geo{v}_{51}-\geo{v}_{17}$, and the $4$-length path $\geo{v}_{15}-\geo{v}_{50} - \geo{v}_{14} - \geo{v}_{49}$.

Place the vertex $\geo{v}_0$ at $(0,0)$ in the plane, and the vertex $\geo{v}_{32}$ at $(0,1)$. Let $t = \angle \geo{v}_{32} \geo{v}_0 \geo{v}_{31} \in (0,\pi/64)$. We have, from the half-path $\geo{v}_0 - \ldots -\geo{v}_{31}$,
\[
\begin{aligned}
	x_{51} &= \sin t - \sin 2t + \sin 3t - \sin 5t + \sin 6t -\sin 7t + \sin 8t\\
	&-\sum_{k=10}^{20} (-1)^k \sin kt + \sin 22t - \sin 23t + \sin 25t - \sin 26t &&= -x_{13},\\
	y_{51} &= \cos t - \cos 2t + \cos 3t - \cos 5t + \cos 6t -\cos 7t + \cos 8t\\
	&-\sum_{k=10}^{20} (-1)^k \cos kt + \cos 22t - \cos 23t + \cos 25t - \cos 26t &&= y_{13},\\
	x_{16} &= x_{51} + \sin 29t &&= -x_{48},\\
	y_{16} &= y_{51} + \cos 29t &&= y_{48},
\end{aligned}
\]
and, from the path $\geo{v}_{15} - \ldots -\geo{v}_{49}$,
\[
\begin{aligned}
	x_{50} &= -1/2 &&= -x_{14}, & y_{50} &= y &&= y_{14},\\
	x_{15} &= x_{50} + \cos t &&= -x_{49}, & y_{15} &= y_{50} + \sin t &&= y_{49}.
\end{aligned}
\]

Finally, $t$ and $y$ are chosen so that $\|\geo{v}_{51}-\geo{v}_{50}\| = \|\geo{v}_{16}-\geo{v}_{15}\| = 2\sin (t/2)$. We obtain $t = 0.0490873533\ldots$ and $L(\geo{Z}_{64}) = 128 \sin(t/2)= 3.1412752155\ldots  > L(\geo{A}_{64})$. One can verify that $\geo{Z}_{64}$ is small and convex.
\end{proof}

Polygons $\geo{Z}_{32}$ and $\geo{Z}_{64}$ offer a significant improvement to the lower bound of the optimal value. We note that
\[
\begin{aligned}
	\ell_{32}^* - L(\geo{Z}_{32}) &< \ub{L}_{32} - L(\geo{Z}_{32}) = 1.09\ldots \times 10^{-5} < \ub{L}_{32} - L(\geo{A}_{32}) = 8.50\ldots \times 10^{-5},\\
	\ell_{64}^* - L(\geo{Z}_{64}) &< \ub{L}_{64} - L(\geo{Z}_{64}) = 2.03\ldots \times 10^{-6} < \ub{L}_{64} - L(\geo{A}_{64}) = 5.54\ldots \times 10^{-6}.
\end{aligned}
\]
Also, the fractions
\[
\begin{aligned}
	\frac{L(\geo{Z}_{32}) - L(\geo{A}_{32})}{\ub{L}_{32} - L(\geo{A}_{32})} &= 0.8715\ldots,\\
	\frac{L(\geo{Z}_{64}) - L(\geo{A}_{64})}{\ub{L}_{64} - L(\geo{A}_{64})} &= 0.6327\ldots
\end{aligned}
\]
indicate that the perimeters of the improved polygons are quite close to the maximal perimeter. This suggests that it is possible that another family of convex equilateral small polygons might produce an improvement to Theorem~\ref{thm:Bn}.

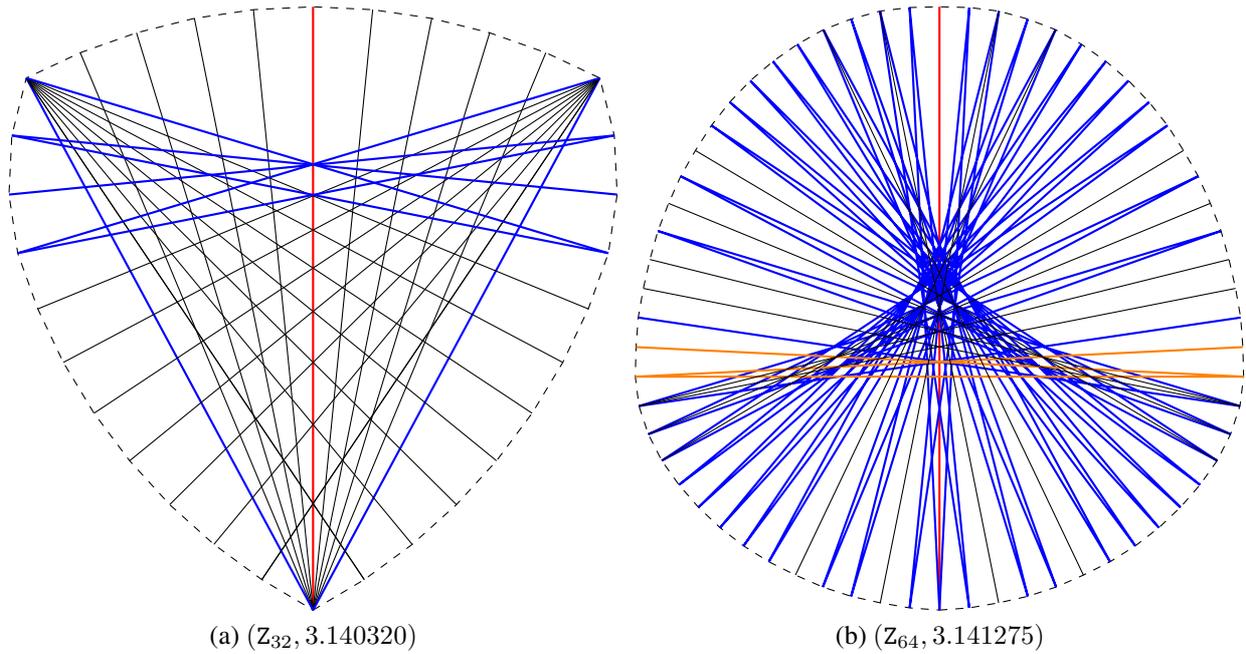
\begin{figure}[h]
\centering
\subfloat[$(\geo{Z}_{32},3.140320)$]{
\begin{tikzpicture}[scale=8]
\draw[dashed] (0,0) -- (0.0842,0.0505) -- (0.1630,0.1089) -- (0.2357,0.1748) -- (0.3016,0.2475) -- (0.3601,0.3263) -- (0.4105,0.4105) -- (0.4525,0.4992) -- (0.4855,0.5916) -- (0.4999,0.6887) -- (0.4952,0.7867) -- (0.4714,0.8819) -- (0.3827,0.9239) -- (0.2903,0.9569) -- (0.1951,0.9808) -- (0.0980,0.9952) -- (0,1) -- (-0.0980,0.9952) -- (-0.1951,0.9808) -- (-0.2903,0.9569) -- (-0.3827,0.9239) -- (-0.4714,0.8819) -- (-0.4952,0.7867) -- (-0.4999,0.6887) -- (-0.4855,0.5916) -- (-0.4525,0.4992) -- (-0.4105,0.4105) -- (-0.3601,0.3263) -- (-0.3016,0.2475) -- (-0.2357,0.1748) -- (-0.1630,0.1089) -- (-0.0842,0.0505) -- cycle;
\draw[red,thick] (0,0)--(0,1);
\draw (0,0)--(0.0980,0.9952);\draw (0,0)--(-0.0980,0.9952);
\draw (0,0)--(0.1951,0.9808);\draw (0,0)--(-0.1951,0.9808);
\draw (0,0)--(0.2903,0.9569);\draw (0,0)--(-0.2903,0.9569);
\draw (0,0)--(0.3827,0.9239);\draw (0,0)--(-0.3827,0.9239);
\draw[blue,thick] (0,0)--(0.4714,0.8819);\draw[blue,thick] (0,0)--(-0.4714,0.8819);
\draw (0.0842,0.0505)--(-0.4714,0.8819);\draw (-0.0842,0.0505)--(0.4714,0.8819);
\draw (0.1630,0.1089)--(-0.4714,0.8819);\draw (-0.1630,0.1089)--(0.4714,0.8819);
\draw (0.0842,0.0505)--(-0.4714,0.8819);\draw (-0.0842,0.0505)--(0.4714,0.8819);
\draw (0.2357,0.1748)--(-0.4714,0.8819);\draw (-0.2357,0.1748)--(0.4714,0.8819);
\draw (0.3016,0.2475)--(-0.4714,0.8819);\draw (-0.3016,0.2475)--(0.4714,0.8819);
\draw (0.3601,0.3263)--(-0.4714,0.8819);\draw (-0.3601,0.3263)--(0.4714,0.8819);
\draw (0.4105,0.4105)--(-0.4714,0.8819);\draw (-0.4105,0.4105)--(0.4714,0.8819);
\draw (0.4525,0.4992)--(-0.4714,0.8819);\draw (-0.4525,0.4992)--(0.4714,0.8819);
\draw[blue,thick] (0.4855,0.5916)--(-0.4714,0.8819);\draw[blue,thick] (-0.4855,0.5916)--(0.4714,0.8819);
\draw[blue,thick] (0.4952,0.7867)--(-0.4855,0.5916);\draw[blue,thick] (-0.4952,0.7867)--(0.4855,0.5916);
\draw[blue,thick] (0.4952,0.7867)--(-0.4999,0.6887);\draw[blue,thick] (-0.4952,0.7867)--(0.4999,0.6887);
\end{tikzpicture}
\label{figure:Z32}
}
\subfloat[$(\geo{Z}_{64},3.141275)$]{
\begin{tikzpicture}[scale=8]
\draw[dashed] (0,0) -- (0.0490,0.0036) -- (0.0973,0.0120) -- (0.1452,0.0228) -- (0.1918,0.0382) -- (0.2367,0.0580) -- (0.2805,0.0801) -- (0.3220,0.1064) -- (0.3607,0.1366) -- (0.3962,0.1704) -- (0.4283,0.2076) -- (0.4565,0.2477) -- (0.4786,0.2915) -- (0.4940,0.3382) -- (0.5000,0.3869) -- (0.4988,0.4359) -- (0.4952,0.4849) -- (0.4868,0.5332) -- (0.4760,0.5811) -- (0.4629,0.6284) -- (0.4453,0.6742) -- (0.4254,0.7191) -- (0.4012,0.7618) -- (0.3749,0.8033) -- (0.3447,0.8420) -- (0.3109,0.8775) -- (0.2737,0.9096) -- (0.2336,0.9378) -- (0.1909,0.9620) -- (0.1451,0.9797) -- (0.0978,0.9928) -- (0.0491,0.9988) -- (0,1) -- (-0.0491,0.9988) -- (-0.0978,0.9928) -- (-0.1451,0.9797) -- (-0.1909,0.9620) -- (-0.2336,0.9378) -- (-0.2737,0.9096) -- (-0.3109,0.8775) -- (-0.3447,0.8420) -- (-0.3749,0.8033) -- (-0.4012,0.7618) -- (-0.4254,0.7191) -- (-0.4453,0.6742) -- (-0.4629,0.6284) -- (-0.4760,0.5811) -- (-0.4868,0.5332) -- (-0.4952,0.4849) -- (-0.4988,0.4359) -- (-0.5000,0.3869) -- (-0.4940,0.3382) -- (-0.4786,0.2915) -- (-0.4565,0.2477) -- (-0.4283,0.2076) -- (-0.3962,0.1704) -- (-0.3607,0.1366) -- (-0.3220,0.1064) -- (-0.2805,0.0801) -- (-0.2367,0.0580) -- (-0.1918,0.0382) -- (-0.1452,0.0228) -- (-0.0973,0.0120) -- (-0.0490,0.0036) -- cycle;
\draw[red,thick] (0,0) -- (0,1);
\draw[blue,thick] (0,0) -- (0.0491,0.9988) -- (-0.0490,0.0036) -- (0.0978,0.9928) -- (-0.1452,0.0228) -- (0.1451,0.9797) -- (-0.1918,0.0382) -- (0.1909,0.9620) -- (-0.2805,0.0801) -- (0.2336,0.9378) -- (-0.3220,0.1064) -- (0.2737,0.9096) -- (-0.3607,0.1366) -- (0.3109,0.8775) -- (-0.3962,0.1704) -- (0.3447,0.8420) -- (-0.4283,0.2076) -- (0.3749,0.8033) -- (-0.4565,0.2477) -- (0.4254,0.7191) -- (-0.4786,0.2915) -- (0.4629,0.6284) -- (-0.4940,0.3382) -- (0.4952,0.4849);
\draw[blue,thick] (0,0) -- (-0.0491,0.9988) -- (0.0490,0.0036) -- (-0.0978,0.9928) -- (0.1452,0.0228) -- (-0.1451,0.9797) -- (0.1918,0.0382) -- (-0.1909,0.9620) -- (0.2805,0.0801) -- (-0.2336,0.9378) -- (0.3220,0.1064) -- (-0.2737,0.9096) -- (0.3607,0.1366) -- (-0.3109,0.8775) -- (0.3962,0.1704) -- (-0.3447,0.8420) -- (0.4283,0.2076) -- (-0.3749,0.8033) -- (0.4565,0.2477) -- (-0.4254,0.7191) -- (0.4786,0.2915) -- (-0.4629,0.6284) -- (0.4940,0.3382) -- (-0.4952,0.4849);
\draw[orange,thick] (0.4988,0.4359) -- (-0.5000,0.3869) -- (0.5000,0.3869) -- (-0.4988,0.4359);
\draw (0.0978,0.9928) -- (-0.0973,0.0120);\draw (-0.0978,0.9928) -- (0.0973,0.0120);
\draw (0.1909,0.9620) -- (-0.2367,0.0580);\draw (-0.1909,0.9620) -- (0.2367,0.0580);
\draw (-0.4565,0.2477) -- (0.4012,0.7618);\draw (0.4565,0.2477) -- (-0.4012,0.7618);
\draw (-0.4786,0.2915) -- (0.4453,0.6742);\draw (0.4786,0.2915) -- (-0.4453,0.6742);
\draw (-0.4940,0.3382) -- (0.4760,0.5811);\draw (0.4940,0.3382) -- (-0.4760,0.5811);
\draw (-0.4940,0.3382) -- (0.4868,0.5332);\draw (0.4940,0.3382) -- (-0.4868,0.5332);
\end{tikzpicture}
\label{figure:Z64}
}
\caption{Improved convex equilateral small $n$-gons $(\geo{Z}_n,L(\geo{Z}_n))$: (a) Triacontadigon $\geo{Z}_{32}$ with larger perimeter than $\geo{A}_{32}$; (b) Hexacontatetragon $\geo{Z}_{64}$ with larger perimeter than $\geo{A}_{64}$}
\end{figure}

\section{Conclusion}\label{sec:conclusion}
Lower bounds on the maximal perimeter of convex equilateral small $n$-gons were provided when $n$ is a power of $2$ and these bounds are tighter than the previous ones from the literature. For any $n=2^s$ with integer $s\ge 4$, we constructed a convex equilateral small $n$-gon $\geo{A}_n$ whose perimeter is within $\pi^4/n^4 + O(1/n^5)$ of the optimal value. For $n=32$ and $n=64$, we propose solutions with even larger perimeters.

\bibliographystyle{ieeetr}
\bibliography{../../research}

\end{document}